\documentclass[12pt,reqno]{amsart}
\usepackage[cp1251]{inputenc}
\usepackage[T2A]{fontenc}
\usepackage[english]{babel}
\usepackage{geometry}
\usepackage{amsmath, amsthm, amscd, amsfonts, amssymb, graphicx, color}
\usepackage[bookmarksnumbered, colorlinks, plainpages]{hyperref}

\textheight630pt \textwidth475pt \oddsidemargin0pt \evensidemargin0pt
\topmargin5pt \headheight15pt \headsep15pt \tolerance=4000

\numberwithin{equation}{section}

\newtheorem{theorem}{Theorem}[section]
\newtheorem{corollary}[theorem]{Corollary}

\theoremstyle{remark}
\newtheorem{remark}[theorem]{Remark}

\theoremstyle{definition}
\newtheorem{definition}[theorem]{Definition}
\newtheorem*{main-definition}{Main Definition}

\begin{document}

\title[A new look at old theorems of Fej\'{e}r and Hardy]{A new look at old theorems of Fej\'{e}r and Hardy}

%    Information for first author

\author[V. Mikhailets]{Vladimir Mikhailets}

\address{Institute of Mathematics of the Czech Academy of Sciences, \v{Z}itn\'{a} 25, 11567, Praha, Czech Republic; Institute of Mathematics of the National Academy of Sciences of Ukraine, Tereshchen\-kivs'ka 3, Kyiv, 01024, Ukraine}

\email{mikhailets@imath.kiev.ua; mikhailets@math.cas.cz}

%    Information for second author

\author[A. Murach]{Aleksandr Murach}

\address{Institute of Mathematics of the National Academy of Sciences of Ukraine, Tereshchen\-kivs'ka 3, Kyiv, 01024, Ukraine}

\email{murach@imath.kiev.ua}

%    Information for third author

\author[O. Tsyhanok]{Oksana Tsyhanok}

\address{National Technical University of Ukraine "Igor Sikorsky Kyiv Polytechnic Institute", Prospect Peremohy 37, 03056, Kyiv-56, Ukraine}

\email{tsyhanok.ok@gmail.com}

%    General info

\subjclass[2010]{42A20, 42A24, 40E05}

%%42A20 Convergence and absolute convergence of Fourier and trigonometric %%series

%%42A24 Summability and absolute summability of Fourier and trigonometric %%series

%%40E05 Tauberian theorems, general

\keywords{Trigonometric series, Fejer theorem, Hardy theorem, Cesaro summability, homogeneous Banach space}

\begin{abstract}
The article studies the convergence of trigonometric Fourier series via a new Tauberian theorem for Ces\`{a}ro summable series in abstract normed spaces. This theorem generalizes some known results of Hardy and Littlewood for number series. We find sufficient conditions for the convergence of trigonometric Fourier series in homogeneous Banach spaces over the circle. These conditions are expressed in terms of the Fourier coefficients and are weaker than Hardy's condition. We give a description of all Banach function spaces given over the circle and endowed with a norm been  equivalent to a norm in a homogeneous Banach space. We study interpolation properties of such spaces and give new examples of them. We extend the classical Fej\'{e}r theorem on the uniform Ces\`{a}ro summability of the Fourier series on sets by means of a refined version of Cantor's theorem on the uniform continuity of a mapping between metric spaces. We also generalize the classical Hardy theorem on the uniform convergence of the Fourier series on sets.
\end{abstract}

\maketitle

\section{Introduction}\label{sect1}

The theory of trigonometric Fourier series has a long history and occupies a special place in analysis. Problems arising in this theory have been the source for new ideas and methods, which found applications to many other mathematical problems. The main questions of the theory concern the convergence or summability of trigonometric series on certain sets with respect to norms in various function spaces. The spaces $C$ and $L^p$ are the most studied in this regard (see, e.g., the well-known monographs \cite{Bari, Edwards, Zygmund}). The results obtained have become an important benchmark for the theory of general orthogonal series \cite{Alexits61, KashinSaakyan89} and the theory of expansions in eigenfunctions of self-adjoint differential operators \cite{AlimovIlinNikishin76, AlimovIlinNikishin77}. The achievements of the theory of Fourier series have also laid the foundation for creating and developing the wavelet theory and frame theory.

Among the summation methods, the most studied ones are the regular methods of Ces\`{a}ro, de la Vall\'{e}e-Poussin, and Abel, which arose originally  for the summation of divergent number series \cite{Hardy}. Later on, these methods found numerous applications, specifically to the summation of  Fourier series. Apparently, the most famous result here is the Fej\'{e}r fundamental theorem concerning Cesaro's $(C,1)$ method.

In Section~\ref{sect2}, we establish sufficient conditions for the convergence of $(C,1)$ summable series in an arbitrary complex or real linear normed space. The space can be incomplete. Theorem~\ref{thm:theor1} proved there contains some known results from Hardy's book~\cite{Hardy} in the case of number series. The theorem is a base for our further results.

In Section~\ref{sect4}, we obtain a generalization of the classical Hardy theorem on the uniform convergence of the trigonometric series of a continuous periodic function whose Fourier coefficients admit the estimate $\mathrm{O}(n^{-1})$. The generalization means that we replace $C(\mathbb{T})$ with an arbitrary homogeneous Banach space and weaken the condition on the Fourier coefficients. Properties of homogeneous Banach space are previously studied in Section~\ref{sect3}. We also give new examples of such spaces.

Section~\ref{sect5} establishes a refined form of Cantor's theorem on the uniform continuity of a mapping between metric spaces.

The final Section~\ref{sect6} proves extended versions of the known theorems of Fej\'{e}r and Hardy on the uniform summability and convergence of trigonometric Fourier series. We show that these theorems remain valid if we replace the circle $\mathbb{T}$ or its closed subinterval with an arbitrary closed subset of $\mathbb{T}$. Moreover, we weaken Hardy's condition on the Fourier coefficients in the convergence theorem.

\section{A Tauberian theorem}\label{sect2}

Let $X$ be a real or complex linear space endowed with a norm $\|\cdot\|$. We prove a new Tauberian-type theorem concerning series in $X$. It generalizes the classical Hardy and Hardy--Littlewood theorems in the case of number series and the $(C,1)$ summation method.

Given a sequence $(u_k)_{k=0}^{\infty}$ in $X$, we use the standard notation
\begin{gather*}
S_n:=\sum_{k=0}^n u_k,\\
\sigma_n:=\frac{S_0+S_1+ ... +S_n}{n+1},\\
\sigma_{n,m}:=\frac{S_n+S_{n+1}+...+S_{n+m-1}}{m},
\end{gather*}
with the integers $n\geq0$ and $m\geq1$. Recall that the series $\sum_{k=0}^\infty u_{k}$ is called a $(C,1)$ summable in $X$ to a vector $S\in X$ if
\begin{equation}\label{Cesaro-summability}
\sigma_n\to S\;\;\mbox{in}\;\;X\;\;\mbox{as}\;\;n\to\infty.
\end{equation}
The $(C,1)$ summation method is regular in $X$; i.e., if the series satisfies \eqref{Cesaro-summability} and is convergent in $X$, then $S$ is the sum of this series.

\begin{theorem}\label{thm:theor1}
Let $1\leq p<\infty$. Suppose that a series $\sum_{k=0}^\infty u_{k}$ is $(C,1)$ summable in $X$ to a vector $S\in X$ and that
\begin{equation}\label{eq: eq1}
\sum_{k=n+1}^\infty \|u_{k}\|^p = O\left( \frac{1}{n^{p-1}} \right)\;\;\,\mbox{as}\;\;\,n\to\infty.
\end{equation}
Then this series converges to $S$ in $X$.
\end{theorem}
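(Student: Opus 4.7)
The plan is to treat $p=1$ separately as a trivial case: the hypothesis then forces $\sum_{k=0}^{\infty}\|u_{k}\|<\infty$, so the series converges absolutely in $X$, and regularity of the $(C,1)$ method identifies the sum with $S$. For the main case $p>1$ the strategy is the classical Hardy--Littlewood one adapted to the normed-space setting: compare $S_{n}$ with the second-kind Cesàro average $\sigma_{n,m}$ over a short block whose length $m=m(n)$ will be tuned to $n$.

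Two elementary identities, both verified by direct manipulation of partial sums, drive the argument. The first,
\begin{equation*}
\sigma_{n,m}-S_{n}=\frac{1}{m}\sum_{j=n+1}^{n+m-1}(n+m-j)\,u_{j},
\end{equation*}
comes from $S_{k}-S_{n}=\sum_{j=n+1}^{k}u_{j}$ and an interchange of summation. The second,
\begin{equation*}
\sigma_{n,m}=\frac{(n+m)\,\sigma_{n+m-1}-n\,\sigma_{n-1}}{m},
\end{equation*}
is obtained by splitting $(n+m)\sigma_{n+m-1}=\sum_{k=0}^{n+m-1}S_{k}$ into the blocks $[0,n-1]$ and $[n,n+m-1]$. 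I will bound $\|\sigma_{n,m}-S_{n}\|$ by $\sum_{j=n+1}^{n+m-1}\|u_{j}\|$ and apply Hölder's inequality with exponents $p$ and $q=p/(p-1)$ together with~\eqref{eq: eq1} to get a clean estimate of the form
\begin{equation*}
\|\sigma_{n,m}-S_{n}\|\leq C^{1/p}\bigl((m-1)/n\bigr)^{(p-1)/p}
\end{equation*}
with $C$ independent of $n$ and $m$. For the other term, the second identity combined with $\sigma_{n}\to S$ shows that $\sigma_{n,m}\to S$ whenever $m\to\infty$ and $n/m$ stays bounded.

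The main obstacle is the genuine tension between these two bounds: the Hölder estimate is small precisely when $m/n$ is small, while the convergence $\sigma_{n,m}\to S$ requires $m/n$ to be bounded \emph{below}. I resolve this in the standard way by choosing $m=\lceil\varepsilon n\rceil$ for a fixed $\varepsilon>0$ and letting $n\to\infty$ first; the triangle inequality then yields
\begin{equation*}
\limsup_{n\to\infty}\|S_{n}-S\|\leq C^{1/p}\,\varepsilon^{(p-1)/p},
\end{equation*}
after which sending $\varepsilon\to 0$ finishes the proof since $p>1$. Incompleteness of $X$ causes no difficulty, because each $S_{k}$, each $\sigma_{n}$, and each $\sigma_{n,m}$ is already an element of $X$, the vector $S\in X$ is given by hypothesis, and the conclusion is the norm statement $\|S_{n}-S\|\to 0$.
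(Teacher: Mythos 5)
Your proposal is correct and follows essentially the same route as the paper's proof: the identity $\sigma_{n,m}-S_{n}=\sum_{j=n+1}^{n+m-1}\bigl(1-(j-n)/m\bigr)u_{j}$, H\"older with exponents $p$ and $q=p/(p-1)$, the representation $\sigma_{n,m}=\sigma_{n+m-1}+\frac{n}{m}(\sigma_{n+m-1}-\sigma_{n-1})$ with $m\asymp\varepsilon n$, and the final $\limsup$-then-$\varepsilon\to0$ step are all exactly the paper's argument. The one point to tighten is the $p=1$ case: in an incomplete space $X$, $\sum\|u_{k}\|<\infty$ does not by itself give convergence of the series in $X$, so you should (as the paper does) pass to the completion $\widetilde{X}$, where the series converges and regularity of $(C,1)$ identifies its sum with $S\in X$.
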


\begin{proof}
If $p=1$, condition \eqref{eq: eq1} means the absolute convergence of the series, which implies its convergence in the completion $\widetilde{X}$ of the normed linear space $X$. Owing to the hypothesis, this series is $(C,1)$ summable in $\widetilde{X}$ to the vector $S\in X\subset{\widetilde{X}}$. Hence, $S$ is the sum of the series due to the regularity of the $(C,1)$ summation method in the Banach space $\widetilde{X}$.

Suppose now that $p>1$. It follows from the known equality
$$
\sigma_{n,m} = S_n + \sum_{j=n+1}^{n+m} \left(1-\frac{j-n}{m} \right)u_j $$
that
$$
\| \sigma_{n,m} - S_n \| \leq \sum_{j=n+1}^{n+m} \left(1-\frac{j-n}{m} \right) \|u_j\|
$$
whenever the integers $n\geq1$ and $m\geq2$. By H\"older's inequality, we get
\begin{align*}
\|\sigma_{n,m}-S_n\| &\leq  \left( 	\sum_{j=n+1}^{n+m-1} \left(1-\frac{j-n}{m} \right)^q \right)^{1/q} \left( \sum_{j=n+1}^{n+m-1} \|u_j\|^p \right)^{1/p}\\
&\leq\left( m-1 \right)^{1/q} \left( \sum_{j=n+1}^{n+m-1} \|u_j\|^p \right)^{1/p},
\end{align*}
with $1/p+1/q=1$. Hence, by hypothesis \eqref{eq: eq1}, there exists a number $M>0$ such that
$$
\| \sigma_{n,m} - S_n \| \leq  \left( m-1 \right)^{1/q} \left( \frac{M}{n^{p-1}} \right)^{1/p}
$$
whenever $n\geq1$ and $m\geq1$. (The $m=1$ case is covered because $\sigma_{n,1}=S_n$.)

Choose a number $\varepsilon>0$ arbitrarily and put
$$
m:= m(n):= [n \varepsilon^q]+1.
$$
(As usual, $[a]$ denotes the integral part of a real number $a$.) Then
$$
\|\sigma_{n,m} - S_n \| \leq  [n \varepsilon^q]^{1/q} \cdot
\frac{M^{1/p}}{n^{(p-1)/p}} \leq \varepsilon M^{1/p}
$$
whenever the integer $n\geq1$. Since $n/m<\varepsilon^{-q}$, we have
\begin{align*}
\sigma_{n,m} &= \left( 1+ \frac{n}{m}\right) \sigma_{n+m-1} - \frac{n}{m}\, \sigma_{n-1}\\
&=\sigma_{n+m-1} + \frac{n}{m} \left( \sigma_{n+m-1} - \sigma_{n-1} \right)  \to S\;\;\,\mbox{in}\;\;\, X\;\;\,\mbox{as}\;\;\,n \to \infty,
\end{align*}
due to the hypothesis. Hence, there exists an integer $n_0=n_0(\varepsilon)\geq1$ such that $$
\| S- \sigma_{n,m} \| < \varepsilon\quad\mbox{whenever}\quad n>n_0.
$$
Then
$$
\| S-S_n \| \leq \|S- \sigma_{n,m} \| + \| \sigma_{n,m} -S_n \| \leq \varepsilon +\varepsilon M^{1/p}
$$
whenever $n>n_0$. This implies the conclusion due to the arbitrariness of positive $\varepsilon$.
\end{proof}

\begin{corollary}\label{n1}
If a series $\sum_{n=0}^\infty u_{n}$ is $(C,1)$ summable in $X$ and satisfies
\begin{equation}\label{Hardy-cond}
\|u_{n}\|= O\left(\frac{1}{n}\right)\;\;\,\mbox{as}\;\;\,n\to\infty,
\end{equation}
then this series is convergent in $X$.
\end{corollary}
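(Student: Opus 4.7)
The plan is to derive the corollary as a direct consequence of Theorem \ref{thm:theor1}, by finding a single value of $p > 1$ for which the Hardy-type bound \eqref{Hardy-cond} implies the integral-tail condition \eqref{eq: eq1}. Since Theorem \ref{thm:theor1} already handles $(C,1)$ summability plus tail decay, no new summability argument is needed; the work is purely in verifying the hypothesis.

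First I would observe that the $p = 1$ case of Theorem \ref{thm:theor1} is useless here, since \eqref{Hardy-cond} does not in general force absolute convergence of $\sum u_n$ (the harmonic series is a witness). So I would choose any $p > 1$ --- for concreteness $p = 2$ --- and then raise \eqref{Hardy-cond} to the $p$-th power to get $\|u_k\|^p = O(1/k^p)$. Summing from $k = n+1$ to $\infty$, a standard comparison with $\int_n^\infty t^{-p}\,dt$ yields
\[
\sum_{k=n+1}^\infty \|u_k\|^p = O\!\left(\frac{1}{n^{p-1}}\right) \quad \text{as } n \to \infty,
\]
which is precisely \eqref{eq: eq1}. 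Combined with the assumption of $(C,1)$ summability, Theorem \ref{thm:theor1} applies and gives convergence of $\sum u_n$ in $X$ to its Ces\`aro sum.

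There is really no hard step here: the corollary is a packaging statement showing that the classical Hardy hypothesis is a special case of the more flexible condition \eqref{eq: eq1}. The only thing to be mindful of is that the exponent $p$ need not equal $1$, and that any $p > 1$ works; in fact the corollary could be strengthened to allow $\|u_n\| = O(n^{-1/p'})$ for any $p' > 1$ provided one keeps track of the dual exponent, but this more general form is already encoded in Theorem \ref{thm:theor1} itself.
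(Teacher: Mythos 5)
Your proof is correct and is essentially identical to the paper's: fix any $p>1$, raise the bound $\|u_n\|\leq M/n$ to the $p$-th power, and compare the tail sum with $\int_n^\infty x^{-p}\,dx$ to obtain condition \eqref{eq: eq1}, after which Theorem~\ref{thm:theor1} finishes the argument. One caution about your closing aside: the claimed strengthening to $\|u_n\|=O(n^{-1/p'})$ with $p'>1$ is not encoded in Theorem~\ref{thm:theor1} and is in fact false --- with $1/p+1/p'=1$ one gets $\|u_k\|^p=O(k^{-(p-1)})$, whose tail is of order $n^{2-p}$ rather than $n^{-(p-1)}$, so \eqref{eq: eq1} fails; and indeed the partial sums $S_n=\sin\sqrt{n}$ give a $(C,1)$ summable divergent series with $\|u_n\|=O(n^{-1/2})$, showing that no such Tauberian condition weaker than $O(1/n)$ of this power type can hold.
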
	

In the case of number series, this result is due to Hardy \cite[Section~6.1, Theorem~63]{Hardy} (see also \cite[Chapter~III, Theorem~1.26]{Zygmund}).

\begin{proof}
If there exists a number $M>0$ such that $\|u_{n}\|\leq M/n$ for each $n\geq1$, then, whatever $p>1$, we have
\begin{equation*}
\sum_{k=n+1}^\infty\|u_{k}\|^p\leq M\sum_{k=n+1}^\infty \frac{1}{k^p} \leq M\int\limits_n^\infty\frac{dx}{x^p}=O\left(\frac{1}{n^{p-1}}\right)
\;\;\,\mbox{as}\;\;\,n\to\infty.
\end{equation*}
Thus, Hardy's condition \eqref{Hardy-cond} implies \eqref{eq: eq1} for any $p>1$.
\end{proof}

\begin{corollary}\label{n2}
Let $1\leq p<\infty$. If a series $\sum_{n=0}^\infty u_{n}$ is $(C,1)$ summable in $X$ and satisfies
\begin{equation}\label{corol-sum-cond}
\sum_{n=0}^\infty \ n^{p-1} \|u_{n}\|^p < \infty,
\end{equation}
then this series is convergent in $X$.
\end{corollary}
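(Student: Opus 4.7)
The plan is to reduce Corollary \ref{n2} to Theorem \ref{thm:theor1} by verifying that hypothesis \eqref{corol-sum-cond} implies the tail-decay condition \eqref{eq: eq1}. Since the $(C,1)$-summability of $\sum u_n$ is already assumed, only the Tauberian side condition needs to be established.

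I would split into the two cases from the theorem. For $p = 1$, hypothesis \eqref{corol-sum-cond} reduces to absolute convergence $\sum_{n=0}^\infty \|u_n\| < \infty$, so its tail $\sum_{k=n+1}^\infty \|u_k\|$ tends to $0$ and is in particular $O(1) = O(n^{-(p-1)})$, which is \eqref{eq: eq1} in this case.

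For $p > 1$, the key observation is the elementary monotonicity $k^{p-1} \geq n^{p-1}$ whenever $k \geq n \geq 1$, valid precisely because $p - 1 > 0$. Multiplying $\|u_k\|^p$ by $k^{p-1}/n^{p-1} \geq 1$ and summing over $k \geq n+1$ should yield
$$
\sum_{k=n+1}^\infty \|u_k\|^p \;\leq\; \frac{1}{n^{p-1}}\sum_{k=n+1}^\infty k^{p-1}\|u_k\|^p \;=\; O\!\left(\frac{1}{n^{p-1}}\right),
$$
the last bound holding because the full series $\sum_{k=0}^\infty k^{p-1}\|u_k\|^p$ converges by hypothesis, forcing its tail to be bounded (indeed, to vanish). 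This is precisely \eqref{eq: eq1}, and Theorem \ref{thm:theor1} then delivers the convergence of $\sum u_n$ to its Ces\`aro sum.

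I do not anticipate any real obstacle: the whole argument amounts to one elementary tail estimate feeding into the previously proved Tauberian theorem. The only point worth checking is that the borderline case $p = 1$ is genuinely covered by the same scheme, which it is via the trivial absolute-convergence observation above; and that the monotonicity step in the $p > 1$ case uses $p \geq 1$ essentially, so both cases of the corollary use the exponent condition in exactly the way Theorem \ref{thm:theor1} demands.
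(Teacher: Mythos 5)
Your proposal is correct and follows essentially the same route as the paper: the paper's proof is exactly the one-line tail estimate $n^{p-1}\sum_{k=n+1}^\infty\|u_k\|^p\leq\sum_{k=n+1}^\infty k^{p-1}\|u_k\|^p\to0$, which gives even the stronger $o(n^{-(p-1)})$ bound and then invokes Theorem~\ref{thm:theor1}. Your separate treatment of $p=1$ is harmless but unnecessary, since the same monotonicity inequality covers that case as well.
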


As for number series, this is Hardy and Littlewood's result  \cite[Section~6.3, Theorem~69]{Hardy}.

\begin{proof}
Indeed, if \eqref{corol-sum-cond} holds true, then
\begin{equation*}
n^{p-1}\sum_{k=n+1}^\infty\|u_{k}\|^p\leq
\sum_{k=n+1}^\infty k^{p-1}\|u_{k}\|^p\to 0
\;\;\,\mbox{as}\;\;\,n\to\infty,
\end{equation*}
which implies the stronger condition
\begin{equation*}
\sum_{k=n+1}^\infty \|u_{k}\|^p = o\left( \frac{1}{n^{p-1}} \right)\;\;\,\mbox{as}\;\;\,n\to\infty
\end{equation*}
than \eqref{eq: eq1}.
\end{proof}

Corollaries \ref{n1} and \ref{n2} are not comparable. Theorem \ref{thm:theor1} is a stronger result than these corollaries. Note that conditions \eqref{eq: eq1} and \eqref{corol-sum-cond} are equivalent if $p=1$.

\begin{remark}
It follows from from the proof of Theorem~\ref{thm:theor1} that this theorem remains valid in the $p>1$ case if $\|\cdot\|$ is a seminorm.
\end{remark}

\section{Homogeneous function spaces}\label{sect3}

We study the convergence of trigonometric Fourier series of $2\pi$-periodic complex-valued functions in homogeneous Banach spaces. Let $\mathbb{T}$ denote the additive quotient group $\mathbb{R}/2\pi\mathbb{Z}$, with $2\pi\mathbb{Z}$ being a subgroup of all real numbers devisible by $2\pi$. We naturally interpret functions on $\mathbb{T}$ as $2\pi$-periodic functions on the closed interval $[-\pi,\pi]$ or as functions on a circle of radius $1$. As usual, $L^1(\mathbb{T})$ denotes the complex Banach space of all integrable functions on $\mathbb{T}$ with respect to the Lebesgue measure. We consider the following norm in this space:
\begin{equation*}
\|f\|_{L^1(\mathbb{T})}:=\frac{1}{2\pi}\int\limits_{-\pi}^{\pi}|f(s)|ds.
\end{equation*}

Let us recall the following notion \cite[Chapter~I, Subsection~2.10]{Katznelson}:

\begin{definition} A complex Banach space $X$ endowed with a norm $\|\cdot\|$ is called \emph{homogeneous} on $\mathbb{T}$ if $X$ is a linear subspace of $L^1(\mathbb{T})$ and satisfies the following three conditions:
\begin{itemize}
\item[($H_1$)] $\|f\|_{L^1(\mathbb{T})}\leq\|f\|$ whenever $f\in X$;
\item[($H_2$)] $f(\cdot+a)\in X$ and $\|f(\cdot+a)\|=\|f(\cdot)\|$ whenever $f\in X$ and $a\in\mathbb{T}$;
\item[($H_3$)] $\|f(\cdot+b)-f(\cdot+a)\|\to 0$ as $b\to a$ whenever $f\in X$ and $a\in\mathbb{T}$.
    \end{itemize}
\end{definition}

Since we consider homogeneous Banach spaces only on $\mathbb{T}$, we will omit the phrase "on $\mathbb{T}$"\ when referring to such spaces.

The complex Banach space $C(\mathbb{T})$ of all continuous functions on $\mathbb{T}$ is homogeneous. Some important properties of Fourier series in $C(\mathbb{T})$ are preserved for homogeneous Banach spaces and are invariant with respect to a choice of an equivalent norm. The following result gives a description of such norms.

\begin{theorem}\label{thm:theor2_dodana}
Let a Banach space $X$ be a linear subspace of $L^1(\mathbb{T})$ such that $f(\cdot+a)\in X$ whenever $f\in X$ and $a\in\mathbb{T}$. Then the norm $\|\cdot\|_{X}$ in $X$ is equivalent to a certain norm $\|\cdot\|$ subject to conditions $(H_{1})$--$(H_{3})$ if and only if $\|\cdot\|_{X}$ satisfies the following two conditions:
\begin{itemize}
\item [($H_1^*$)] there exists a number $c>0$ such that $\|f\|_{L^1(\mathbb{T})}\leq c\,\|f\|_{X}$ for every $f\in X$;
\item [($H_3^*$)] $\|f(\cdot+a)-f(\cdot)\|_{X}\to0$ as $a\to0$ for every $f\in X$.
\end{itemize}
\end{theorem}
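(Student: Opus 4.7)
The plan is to prove the easy direction in a line and focus on the converse, where the natural candidate for an equivalent norm is
$$
\|f\|:=c\,\sup_{a\in\mathbb{T}}\|f(\cdot+a)\|_{X},
$$
with $c>0$ the constant from $(H_1^{*})$. The whole argument reduces to showing that this supremum is finite and comparable to $\|f\|_{X}$, and for that the main tool I would use is the Banach--Steinhaus theorem applied to the translation operators $T_{a}f:=f(\cdot+a)$ on the Banach space $X$.

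First I would verify the easy direction: if $\|\cdot\|_{X}$ is equivalent to some $\|\cdot\|$ satisfying $(H_{1})$--$(H_{3})$, then $(H_{1}^{*})$ follows directly from $(H_{1})$ and the equivalence, while $(H_{3}^{*})$ is the specialization of $(H_{3})$ to $a=0$ combined again with the equivalence.

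For the converse, assume $(H_{1}^{*})$ and $(H_{3}^{*})$. I would proceed in three preparatory steps. Step~1: each $T_{a}$ is a bounded operator on $X$. Because $X$ is a Banach space and is continuously embedded in $L^{1}(\mathbb{T})$ by $(H_{1}^{*})$, I apply the closed graph theorem: if $f_{n}\to f$ and $T_{a}f_{n}\to g$ in $X$, then both convergences hold in $L^{1}(\mathbb{T})$, so $g=f(\cdot+a)=T_{a}f$ a.e. Step~2: for every fixed $f\in X$ the map $a\mapsto T_{a}f$ is continuous from $\mathbb{T}$ to $X$. Indeed, the identity $T_{a}f-T_{b}f=T_{b}(T_{a-b}f-f)$, combined with boundedness of $T_{b}$ from Step~1 and $(H_{3}^{*})$, yields $\|T_{a}f-T_{b}f\|_{X}\to 0$ as $a\to b$. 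Step~3: since $\mathbb{T}$ is compact, the orbit $\{T_{a}f:a\in\mathbb{T}\}$ is compact in $X$, hence bounded; thus $\sup_{a}\|T_{a}f\|_{X}<\infty$ for every $f$. The Banach--Steinhaus theorem then gives a finite constant $C:=\sup_{a\in\mathbb{T}}\|T_{a}\|_{X\to X}$.

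With this in hand, define $\|f\|:=c\sup_{a\in\mathbb{T}}\|T_{a}f\|_{X}$. Then $\|\cdot\|$ is clearly a norm on $X$ satisfying $c\|f\|_{X}\leq\|f\|\leq cC\|f\|_{X}$, so it is equivalent to $\|\cdot\|_{X}$. Property $(H_{1})$ follows from the translation invariance of the $L^{1}$-norm together with $(H_{1}^{*})$: $\|f\|_{L^{1}(\mathbb{T})}=\|T_{a}f\|_{L^{1}(\mathbb{T})}\leq c\|T_{a}f\|_{X}$ for every $a$, whence $\|f\|_{L^{1}(\mathbb{T})}\leq\|f\|$. Property $(H_{2})$ is built in, since $\{a+b:a\in\mathbb{T}\}=\mathbb{T}$ for each $b\in\mathbb{T}$. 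For $(H_{3})$ I would use again the factorization $T_{b}f-T_{a}f=T_{a}(T_{b-a}f-f)$ and the uniform bound from Step~3 to estimate
$$
\|T_{b}f-T_{a}f\|\leq cC\,\|T_{b-a}f-f\|_{X},
$$
which tends to $0$ as $b\to a$ by $(H_{3}^{*})$.

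The main obstacle, in my view, is the passage from the pointwise-at-zero continuity hypothesis $(H_{3}^{*})$ to a genuine uniform-boundedness statement for the family $\{T_{a}\}_{a\in\mathbb{T}}$: without that uniform bound one cannot upgrade $(H_{3}^{*})$ to continuity of the orbit at every point, and the candidate norm may fail to be comparable to $\|\cdot\|_{X}$. The closed graph theorem together with Banach--Steinhaus is precisely what bridges this gap; everything else is then a short verification.
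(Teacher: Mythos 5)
Your proposal is correct and follows essentially the same route as the paper: closed graph theorem via $(H_1^{*})$ to get boundedness of each $T_{a}$, continuity of the orbit map from $(H_3^{*})$, Banach--Steinhaus on the compact group $\mathbb{T}$, and the equivalent norm $c\sup_{a}\|T_{a}f\|_{X}$. The only cosmetic differences are your factorization $T_{a}f-T_{b}f=T_{b}(T_{a-b}f-f)$ in place of the paper's $T_{b-a}T_{a}f-T_{a}f$, and $\sup$ in place of $\max$ (which coincide by continuity on the compact set).
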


\begin{proof} The necessity is obvious. Let us prove the sufficiency. Suppose that the norm $\|\cdot\|_{X}$ satisfies conditions ($H_1^*$) and ($H_3^*$). Given $a\in\mathbb{T}$, we consider the translation operator $T_{a}:f(\cdot)\mapsto f(\cdot+a)$ with $f\in X$. It follows from condition ($H_1^*$) that this operator is closed in the Banach space $X$ endowed with the norm $\|\cdot\|_{X}$. Indeed, let a sequence $(f_{k})\subset X$ satisfy $\|f_{k}\|_{X}\to 0$ and $\|T_{a}f_{k}-g\|_{X}\to 0$ for certain $g\in X$ as $k\to\infty$. Then, by condition ($H_1^*$), we get $f_{k}\to 0$ and $T_{a}f_{k}\to g$ in $L^1(\mathbb{T})$ as $k\to\infty$, which implies $g=0$. This means that the operator $T_{a}$ is closed in $X$. Hence, being defined on the whole Banach space $X$, this operator is bounded on $X$ due to the closed graph theorem.

Given $f\in X$, we see by condition ($H_3^*$) that
\begin{equation}\label{convergence-T}
\|T_{b}f-T_{a}f\|_{X}=\|T_{b-a}T_{a}f-T_{a}f\|_{X}\to0
\;\;\mbox{as}\;\;b\to a\;\;\mbox{whenever}\;\;a\in\mathbb{T}.
\end{equation}
Hence, the function $\|T_{a}f\|_{X}$ of $a$ is continuous on the compact set $\mathbb{T}$, which implies its boundedness on $\mathbb{T}$.
By the Banach--Steinhaus theorem, there exists a number $m\geq1$ such that $\|T_{a}\|\leq m$ for every $a\in\mathbb{T}$; here, $\|T_{a}\|$ denotes the norm of the bounded operator $T_{a}$ on $X$.

Putting
\begin{equation*}
\|f\|=c\cdot\max_{a\in \mathbb{T}}\|T_a f\|_{X}
\quad\mbox{whenever}\quad f\in X,
\end{equation*}
we see that $\|\cdot\|$ is a norm in $X$ and satisfies condition ($H_2$). Owing to condition ($H_1^*$), this norm satisfies condition ($H_1$). The norms $\|\cdot\|$ and $\|\cdot\|_{X}$ are equivalent on $X$; namely:  $c\,\|f\|_{X}\leq \|f\|\leq c\,m\|f\|_{X}$ whenever $f\in X$. Then the norm $\|\cdot\|$ satisfies condition ($H_3$) by \eqref{convergence-T}.
\end{proof}

The property of Banach spaces to be homogeneous are preserved (up to equivalence of norms) under various methods of interpolation between normed spaces, which yields important examples of homogeneous spaces. We consider the two most commonly used methods. Let $X_0$ and $X_1$ be Banach spaces, and suppose that they are continuously embedded in ${L^1(\mathbb{T})}$. As usual, $(X_0,X_1)_{\theta,q}$ and $[X_0,X_1]_{\theta}$ denote the Banach spaces obtained respectively by the real method and complex method of interpolation between the spaces $X_0$ and $X_1$, with $\theta$ and $q$ being  interpolation parameters (see, e.g., \cite[Sections 1.3.2 and 1.9.2]{Triebel}). Note that the interpolation spaces $(X_0,X_1)_{\theta,q}$ and $[X_0,X_1]_{\theta}$ are linear subspaces of $L^1(\mathbb{T})$.

\begin{theorem}\label{thm:theor3_dodana}
If the Banach spaces $X_0$ and $X_1$ are homogeneous, then the interpolation spaces $(X_0,X_1)_{\theta,q}$ and
$[X_0,X_1]_{\theta}$ are homogeneous up to equivalence of norms whenever $\nobreak{0<\theta<1}$ and $1\leq q<\infty$.
\end{theorem}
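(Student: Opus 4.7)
The plan is to apply Theorem~\ref{thm:theor2_dodana}: it suffices to show that either interpolation space $Y$ (standing for $(X_0, X_1)_{\theta, q}$ or $[X_0, X_1]_\theta$) is translation invariant and that its norm obeys $(H_1^*)$ and $(H_3^*)$; the fact that $Y$ is a Banach subspace of $L^1(\mathbb{T})$ is already recorded in the paragraph preceding the theorem.

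Translation invariance and $(H_1^*)$ should follow at once from the interpolation theorem in its standard form: a linear map acting boundedly between corresponding members of two couples extends boundedly to the interpolation spaces with operator norm at most the geometric mean $M_0^{1-\theta} M_1^\theta$ of its component norms. Since condition $(H_2)$ says that the translation $T_a : f \mapsto f(\cdot + a)$ is an isometry on each $X_j$, one gets $\|T_a\|_{Y \to Y} \leq 1$; applying the same bound to $T_{-a}$ shows that $T_a$ is in fact an isometry on $Y$. Applying the interpolation theorem to the embedding $X_j \hookrightarrow L^1(\mathbb{T})$ supplied by $(H_1)$, treated as a morphism from $(X_0, X_1)$ to the constant couple $(L^1(\mathbb{T}), L^1(\mathbb{T}))$, and using that interpolation of a Banach space with itself recovers that space up to equivalence of norms, one obtains the continuous inclusion $Y \hookrightarrow L^1(\mathbb{T})$, which is $(H_1^*)$.

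Condition $(H_3^*)$ is the substantive point. For $f \in X_0 \cap X_1$, the continuous embedding $X_0 \cap X_1 \hookrightarrow Y$ yields a constant $C$ with $\|g\|_Y \leq C \max(\|g\|_{X_0}, \|g\|_{X_1})$ for every such $g$; since $T_a f - f$ lies in $X_0 \cap X_1$ (translations preserve each $X_j$), condition $(H_3)$ for $X_0$ and $X_1$ gives $\|T_a f - f\|_Y \to 0$ as $a \to 0$. For a general $f \in Y$, one approximates by elements of $X_0 \cap X_1$: density of $X_0 \cap X_1$ in $Y$ is classical, holding for the real method precisely because $q < \infty$ and for the complex method unconditionally. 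Given $\varepsilon > 0$, pick $f_n \in X_0 \cap X_1$ with $\|f - f_n\|_Y < \varepsilon / 3$; then, exploiting that $T_a$ is an isometry on $Y$,
$$
\|T_a f - f\|_Y \leq 2 \|f - f_n\|_Y + \|T_a f_n - f_n\|_Y < \varepsilon
$$
once $a$ is close enough to $0$.

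The main obstacle is precisely the density of $X_0 \cap X_1$ in $Y$: without it the $\varepsilon/3$-reduction collapses, and indeed $(H_3^*)$ can genuinely fail for the real method with $q = \infty$, as is illustrated by Lipschitz-type spaces of the form $(C(\mathbb{T}), C^1(\mathbb{T}))_{\theta, \infty}$. Every other step is either an application of $(H_1)$--$(H_3)$ componentwise or a direct consequence of the interpolation theorem.
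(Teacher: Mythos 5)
Your proposal is correct and follows essentially the same route as the paper: reduce to Theorem~\ref{thm:theor2_dodana}, get translation invariance and $(H_1^*)$ from the interpolation property of operators, verify $(H_3^*)$ on $X_0\cap X_1$ and extend by density using the uniform bound $\|T_a\|\leq1$. The only cosmetic difference is that you use the continuous embedding $X_0\cap X_1\hookrightarrow Y$ with the max norm where the paper invokes the multiplicative estimate $\|f\|_{X}\leq c\,\|f\|_{0}^{1-\theta}\|f\|_{1}^{\theta}$; both yield the same conclusion for $T_af-f$.
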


\begin{proof}
Suppose that the Banach spaces $X_0$ and $X_1$ are homogeneous. Let $X$ denote either $(X_0,X_1)_{\theta,q}$ or $[X_0,X_1]_{\theta}$ for $\theta$ and $q$ indicated. Let us prove that $X$ satisfies the hypotheses of Theorem~\ref{thm:theor2_dodana}. Whatever $a\in\mathbb{T}$, the translation operator $T_{a}:f(\cdot)\mapsto f(\cdot+a)$, with $f\in L^1(\mathbb{T})$, sets the isometric operators on $X_0$ and $X_1$. Hence, $T_{a}$ is a bounded operator on the interpolation space $X$ and
\begin{equation}\label{estimate-T-operator}
\|T_{a}:X\to X\|\leq \|T_{a}:X_0\to X_0\|^{1-\theta}\,\|T_{a}:X_1\to X_1\|^{\theta}\leq1,
\end{equation}
due to \cite[Theorems 1.3.3 (a) and 1.9.3 (a)]{Triebel}. Specifically,
$f(\cdot+a)\in X$ whenever $f\in X$. Since the identity operator on $L^1(\mathbb{T})$ sets the bounded embedding operators $X_j\hookrightarrow L^1(\mathbb{T})$, with $j=0,1$, then it also sets the bounded embedding operator $X\hookrightarrow L^1(\mathbb{T})$. Thus, the norm in $X$ satisfies condition ($H_1^*$).

As is known \cite[Theorems 1.3.3 (g) and 1.9.3 (f)]{Triebel}, there exists a number $c=c(\theta,q)>0$ such that
\begin{equation*}
\|f\|_{X}\leq c\,\|f\|_{0}^{1-\theta}\|f\|_{1}^{\theta}\quad\mbox{for every}\quad f\in X_0\cap X_1,
\end{equation*}
with $\|\cdot\|_{0}$ and $\|\cdot\|_{1}$ denoting the norms in $X_0$ and $X_1$, resp. Hence, the norm in $X$ satisfies condition ($H_3^*$) provided that $f\in X_0\cap X_1$. Since the set $X_0\cap X_1$ is dense in $X$ \cite[Theorems 1.6.2 and 1.9.3 (c)]{Triebel} and because of \eqref{estimate-T-operator}, this norm satisfies condition ($H_3^*$) whenever $f\in X$.

Now the conclusion of Theorem~\ref{thm:theor3_dodana} follows from Theorem~\ref{thm:theor2_dodana}.
\end{proof}

We mention the following examples of homogeneous Banach spaces up to equivalence of norms:
\begin{itemize}
\item [1.] The \emph{space} $C(\mathbb{T})$ of \emph{continuous} functions.

\item[2.] The \emph{space} $C^{(n)}(\mathbb{T})$ of $n\in \mathbb{N}$ times \emph{continuously differentiable} $2\pi$-periodic functions endowed with the norm
    \begin{equation*}
    \|f\|_{C^{(n)}(\mathbb{T})}:=\max_{0\leq k \leq n}\|f^{(k)}\|_{C(\mathbb{T})}.
    \end{equation*}
    This space is homogeneous and separable.

\item[3.] The \emph{Lebesgue spaces} $L^p(\mathbb{T})$, with
$1\leq p<\infty$.

\item[4.] The \emph{Orlicz spaces} $L^{^*}_M(\mathbb{T})$, with the function $M(\cdot)\geq0$ being continuous, even, and convex on $\mathbb{R}$ and satisfying the conditions
\begin{equation*}
\lim_{x\to0}\frac{M(x)}{x}=0\quad\mbox{and}\quad  \lim_{x\to\infty}\frac{M(x)}{x}=\infty;
\end{equation*}
see, e.g., \cite[\S~9]{KrasnoselskiiRutickii} or \cite[Section~4.6]{PickKufnerJohnFucik}. All separable Orlicz spaces are homogeneous.

\item[5.] \emph{Symmetric spaces.} The norm in any separable symmetric space $X$ over $\mathbb{T}$ is invariant and continuous with respect to translations
    \cite[Chapter~II, \S~4, Subsections 1 and~5]{KreinPetuninSemenov}. Hence, $X$ is homogeneous (up to a proportional norm). Specifically, the \emph{endpoint Lorentz space} $\Lambda_\psi(\mathbb{T})$ is homogeneous provided that the concave increasing function $\psi(t)\geq0$ of $t\geq0$ satisfies $\psi(t)\to0$ as $t\to0$ and $\psi(t)\to\infty$ as $t\to\infty$; see, e.g.,
    \cite[Chapter~II, \S~5, Subsection~5]{KreinPetuninSemenov} and \cite[Section~7.10]{PickKufnerJohnFucik}.

\item[6.] The \emph{Lorentz spaces} $L^{p,q}(\mathbb{T})$, with $1<p<\infty$ and $1\leq q<\infty$ (see, e.g.,
    \cite[Subsection 1.18.6]{Triebel} or
    \cite[Section 8.1]{PickKufnerJohnFucik}).
    They are obtained by the real interpolation between Lebesgue spaces; namely: if $1<p_{0}<\infty$, $1<p_{1}<\infty$, $0<\theta<1$, and $1/p=(1-\theta)/p_{0}+\theta/p_{1}$, then
    \begin{equation*}
    L^{p,q}(\mathbb{T})=(L^{p_0}(\mathbb{T}),L^{p_1}(\mathbb{T}))_{\theta,q}
    \end{equation*}
    up to equivalence of norms \cite[Theorem 1.18.6/2]{Triebel}. These Lorentz spaces are symmetric and separable.

\item[7.] The \emph{Sobolev spaces}   $W_p^n(\mathbb{T})=H_p^n(\mathbb{T})$, with
   $1\leq p<\infty$ and $n\in\mathbb{N}$, endowed with the norms
   \begin{equation*}
   \|f\|_{W_p^n(\mathbb{T})}:=\left( \sum_{k=0}^{n} \|f^{(k)}\|_{L^p(\mathbb{T})}^p \right)^{1/p},
   \end{equation*}
   are homogeneous and separable.

\item[8.] The \emph{fractional Sobolev spaces} $H_p^s(\mathbb{T})$, with $0<s\notin\mathbb{N}$ and $1<p<\infty$. They are obtained by the complex interpolation between integer order Sobolev spaces:
       \begin{equation*}
       H_p^s(\mathbb{T})=[W_p^{[s]}(\mathbb{T}), W_p^{[s]+1}(\mathbb{T})]_{\{s\}}
       \end{equation*}
       up to equivalence of norms, which follows from
       \cite[Theorem 2.4.2/1(d)]{Triebel}. (As usual, $[s]$ is the integral part of $s$, $\{s\}=s-[s]$, and $W_p^{0}(\mathbb{T}):=L^{p}(\mathbb{T}))$.

\item[9.] The \emph{Besov spaces} $B_{p,q}^{s}(\mathbb{T})$, with  $0<s<\infty$, $1<p<\infty$, and $1\leq q<\infty$. They are obtained by the real interpolation between Sobolev spaces; namely if $0<s_0<s_1$, $0<\theta<1$, and $s=(1-\theta)s_0+\theta s_1$, then
       \begin{equation*}
       B_{p,q}^{s}(\mathbb{T})=(H_{p}^{s_0}(\mathbb{T}), H_{p}^{s_1}(\mathbb{T}))_{\theta,q}
       \end{equation*}
       up to equivalence of norms, which follows from
       \cite[Theorem 2.4.2/2]{Triebel}.
\end{itemize}

\begin{remark}
All homogeneous Banach spaces from these examples are separable. This is not accidental. Indeed, if $X$ is a homogeneous Banach space, then the set of all trigonometric polynomials belonging to $X$ is dense in $X$, which     follows from the abstract version of Fejer's theorem considered in the next section. This implies the separability of~$X$.
\end{remark}

\section{Fourier series in homogeneous Banach spaces}\label{sect4}

Given a function $f\in L^1(\mathbb{T})$, we use the standard notation for its Fourier coefficients
\begin{equation*}
\widehat{f}(k):=\frac{1}{2\pi}\int\limits_{-\pi}^{\pi}f(s)e^{-iks}ds,
\;\;\mbox{with}\;\;k\in\mathbb{Z},
\end{equation*}
Fourier series
\begin{equation}\label{Fourier-series}
f(x)\sim\sum_{k\in \mathbb{Z}}\widehat{f}(k)e^{ikx},
\end{equation}
and its symmetric partial sums
\begin{equation*}
S_n(x,f):=\sum_{|k|\leq n}\widehat{f}(k)e^{ikx}
\;\;\mbox{of}\;\;x\in\mathbb{T},
\;\;\mbox{with}\;\;0\leq n\in\mathbb{Z}.
\end{equation*}
Hence,
\begin{equation*}
\sigma_n(x,f):=\frac{S_0(x,f)+S_1(x,f)+ ... +S_n(x,f)}{n+1}.
\end{equation*}

If a function $f$ belongs to a homogeneous Banach space $X$, then $\sigma_n(\cdot,f)\in X$ whenever $0\leq n\in\mathbb{Z}$ due to \cite[Chapter~I, Theorem~2.11]{Katznelson}; hence, every $S_n(x,f)\in X$ as well. Therefore, it makes sense to study conditions on the function $f$ for its Fourier series to converge to $f$ in~$X$. Here, the classical case where $X=C(\mathbb{T})$ is a natural guide.

Recall (see, e.g., \cite[Chapter~II, Section~2.2]{Katznelson}) that Hardy's Tauberian theorem for Fourier series states the following: if a function $f\in L^1(\mathbb{T})$ satisfies the condition
\begin{equation}\label{Hardy-cond-Fourier}
|\widehat{f}(k)|= O\left(\frac{1}{|k|}\right)\quad\mbox{as}\quad|k|\to\infty,
\end{equation}
then $\sigma_n(x,f)$ and $S_n(x,f)$ converge for the same values of $x$ to the same limit; also, if in addition $\sigma_n(x,f)$ converges uniformly on a certain subset of $\mathbb{T}$, so does $S_n(x,f)$ on this subset.
According to the Fej\'{e}r theorem, $\sigma_n(x,f)$ converges uniformly to $f$ on $\mathbb{T}$ whatever $f\in C(\mathbb{T})$. Hence, if a function $f\in C(\mathbb{T})$ satisfies \eqref{Hardy-cond-Fourier}, then the Fourier series \eqref{Fourier-series} converges to $f$ uniformly on $\mathbb{T}$ (i.e. in the Banach space $C(\mathbb{T})$). This result, called Hardy's test, is a generalization of the Dirichlet--Jordan test for the uniform convergence of Fourier series because every function $f$ of bounded variation on $\mathbb{T}$ satisfies \eqref{Hardy-cond-Fourier} (see, e.g., \cite[Capter~II, Theorems 4.12 and 8.1]{Zygmund}).

Of course, convergence tests expressed in terms of the Fourier coefficients  are of special interest. Such tests for the uniform convergence can be proved with the help of Tauberian theorems for number series. To obtain a test for convergence in homogeneous Banach spaces, we combine an abstract version of the Fej\'{e}r theorem for these spaces with Tauberian Theorem~\ref{thm:theor1}.

The abstract version of the Fej\'{e}r theorem states the following: if $X$ is a homogeneous Banach space endowed with a norm $\|\cdot\|$, then
\begin{equation*}
\lim_{n\to\infty}\|f(\cdot)-\sigma_n(\cdot,f)\|=0
\quad\mbox{for every}\quad f\in X.
\end{equation*}
This results from \cite[Chapter~I, Theorem~2.11]{Katznelson} and the fact that the Fej\'{e}r kernels form an approximate identity (in other words, form a summability kernel) (see, e.g., \cite[Section 5.1.1]{Edwards}).

\begin{theorem}\label{thm:theor2}
Let a Banach space $X$ be a linear subspace of $L^1(\mathbb{T})$ such that $f(\cdot+a)\in X$ whenever $f\in X$ and $a\in\mathbb{T}$ and that the function $e^{ikx}$ of $x$ belongs to $X$ whenever $k\in\mathbb{Z}$. Suppose that the norm in $X$ satisfies conditions $(H_1^*)$, $(H_3^*)$, and
\begin{equation}\label{cond-exp}
\|e^{ikx}\|_{X}=O(|k|^\alpha)\quad\mbox{as}\quad|k|\to\infty
\end{equation}
for a certain number $\alpha\geq0$. Then, if a function $f\in X$
satisfies the condition
\begin{equation}\label{cond-Fourier-coeff}
\sum_{|k|\geq n+1}k^{\alpha p}\,|\widehat{f}(k)|^p=
O\left(\frac{1}{n^{p-1}}\right)\quad\mbox{as}\quad n\to\infty
\end{equation}
for a certain number $p\geq1$, then the Fourier series \eqref{Fourier-series} converges to $f$ in the Banach space~$X$.
	\end{theorem}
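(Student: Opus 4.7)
The plan is to combine the abstract Fej\'er theorem for homogeneous Banach spaces with the Tauberian Theorem~\ref{thm:theor1}, the latter applied to a regrouping of the Fourier series of $f$ into a single-indexed series of vectors in $X$.

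First, I would use Theorem~\ref{thm:theor2_dodana}: the hypotheses $(H_1^*)$ and $(H_3^*)$ allow us to replace $\|\cdot\|_{X}$ by an equivalent norm $\|\cdot\|$ satisfying $(H_1)$--$(H_3)$, under which $X$ becomes a genuine homogeneous Banach space. The two norms induce the same notion of convergence, and the bound \eqref{cond-exp} survives up to a constant. The abstract Fej\'er theorem recorded just before the statement then yields $\|\sigma_n(\cdot,f)-f\|\to0$, so the Fourier series \eqref{Fourier-series} is $(C,1)$ summable to $f$ in $X$. To fit the one-sided framework of Theorem~\ref{thm:theor1}, I would pair conjugate frequencies and set
\begin{equation*}
u_0:=\widehat{f}(0),\qquad u_k(\cdot):=\widehat{f}(k)e^{ik(\cdot)}+\widehat{f}(-k)e^{-ik(\cdot)}\quad(k\geq1),
\end{equation*}
so that $\sum_{k=0}^{n}u_k=S_n(\cdot,f)$ and consequently the $(C,1)$ means of $\sum u_k$ coincide with $\sigma_n(\cdot,f)$; hence $\sum u_k$ is $(C,1)$ summable to $f$ in $X$.

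It remains to verify the Tauberian decay \eqref{eq: eq1} for the series $\sum u_k$. The triangle inequality together with \eqref{cond-exp} gives a constant $C>0$ with $\|u_k\|\leq Ck^{\alpha}\bigl(|\widehat{f}(k)|+|\widehat{f}(-k)|\bigr)$ for $k\geq1$, and the convexity inequality $(a+b)^p\leq 2^{p-1}(a^p+b^p)$ yields
\begin{equation*}
\|u_k\|^{p}\leq 2^{p-1}C^{p}\,k^{\alpha p}\bigl(|\widehat{f}(k)|^{p}+|\widehat{f}(-k)|^{p}\bigr).
\end{equation*}
Summing from $k=n+1$ to $\infty$, the right-hand side collapses into $2^{p-1}C^{p}\sum_{|j|\geq n+1}|j|^{\alpha p}|\widehat{f}(j)|^{p}$, which equals $O(n^{1-p})$ by \eqref{cond-Fourier-coeff}. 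Theorem~\ref{thm:theor1} then delivers $S_n(\cdot,f)\to f$ in $X$, and this convergence transfers back to the original norm $\|\cdot\|_{X}$. The main obstacle, if any, is conceptual rather than technical: one must recognise that the exponential bound \eqref{cond-exp} is precisely the mechanism that converts the Fourier-coefficient hypothesis \eqref{cond-Fourier-coeff} into the abstract norm-decay condition \eqref{eq: eq1}, so that the Tauberian theorem can be invoked in $X$; the remainder of the argument is routine book-keeping.
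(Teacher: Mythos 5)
Your proposal is correct and follows essentially the same route as the paper: pair the frequencies $\pm k$ into $u_k(x)=\widehat{f}(k)e^{ikx}+\widehat{f}(-k)e^{-ikx}$, invoke Theorem~\ref{thm:theor2_dodana} to pass to an equivalent homogeneous norm, get $(C,1)$ summability to $f$ from the abstract Fej\'er theorem, check \eqref{eq: eq1} from \eqref{cond-exp} and \eqref{cond-Fourier-coeff}, and conclude by Theorem~\ref{thm:theor1}. You merely spell out the elementary estimate for $\|u_k\|^p$ that the paper leaves implicit.
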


\begin{proof}
Given $f\in X$, we consider the functions $u_{0}(x)\equiv\widehat{f}(0)$ and $u_{k}(x):=\widehat{f}(k)e^{ikx}+\widehat{f}(-k)e^{-ikx}$ of $x\in\mathbb{T}$ for each $k\in\mathbb{N}$. Owing to Theorem~\ref{thm:theor2_dodana}, $X$ is a homogeneous Banach space with respect to a certain equivalent norm $\|\cdot\|$. Hence, it follows from hypotheses \eqref{cond-exp} and \eqref{cond-Fourier-coeff} that the series $\sum_{k=0}^\infty u_{k}$ satisfies condition \eqref{eq: eq1}. This series is $(C,1)$-summable in $X$ to $f$ due to the abstract Fej\'{e}r theorem. Now the conclusion follows from Theorem~\ref{thm:theor1}.
\end{proof}

\begin{remark}
If the norm in a Banach space $X\subset L^1(\mathbb{T})$ satisfies $\|f\|_{X}=\|\,|f|\,\|_{X}$ whenever $f\in X$, then condition \eqref{cond-exp} is fulfilled for $\alpha=0$ (provided that the functions $e^{ikx}$ belong to $X$). Specifically, this is true for $C(\mathbb{T})$, the Lebesgue spaces and the Orlicz spaces. If $X$ is a symmetric Banach space over $\mathbb{T}$, then the Lebesgue space $L^{\infty}(\mathbb{T})$ is continuously embedded in $X$ (cf. \cite[Chapter~II, Theorem~4.1]{KreinPetuninSemenov}), which implies \eqref{cond-exp} for $\alpha=0$. Besides, condition \eqref{cond-exp} is fulfilled with $\alpha=n$ for $C^{n}(\mathbb{T})$ and $W_p^n(\mathbb{T})$ with the integer $n\geq1$.
\end{remark}

If $\alpha=0$, then Hardy's condition \eqref{Hardy-cond-Fourier} implies \eqref{cond-Fourier-coeff}. Thus, Theorem \ref{thm:theor2} can be viewed as a broad generalization of Hardy's test.

\section{A refined form of Cantor's theorem}\label{sect5}

Let us prove a result which will be used in the next section. The result may be also of independent interest.

Let $(X,d)$ and $(Y,\rho)$ be metric spaces. Let us introduce the following notion:

\begin{definition}
A mapping $f:X \rightarrow Y$ is called uniformly continuous on a set
$A\subset X$ over the space $X$ if for every number $\varepsilon>0$ there exists a number $\delta>0$ such that
\begin{equation*}
(x\in A, x'\in X, d(x, x')<\delta)\Longrightarrow
\rho(f(x),f(x'))<\varepsilon.
\end{equation*}
\end{definition}

If $A$ consists of a single point, the above definition means the continuity of $f$ at this point. If $A=X$, this definition coincides with the known one.

\begin{theorem}\label{thm:theor3}
If a mapping $f:X \rightarrow Y$ is continuous on a compact set $A\subset X$, then $f$ is uniformly continuous on $A$ over the space $X$. 	
\end{theorem}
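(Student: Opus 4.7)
The plan is to give a proof by contradiction using sequential compactness, which handles this refined statement almost as cleanly as the classical one. The key observation is that in the hypothesis $(x \in A,\ x' \in X,\ d(x,x') < \delta)$ only one of the two points is required to lie in $A$, but sequential compactness of $A$ still lets us extract a limit point inside $A$, at which $f$ is assumed continuous.

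Suppose, for contradiction, that the conclusion fails. Then there exists $\varepsilon_{0} > 0$ such that for every $n \in \mathbb{N}$ one can find $x_{n} \in A$ and $x_{n}' \in X$ with $d(x_{n}, x_{n}') < 1/n$ but $\rho(f(x_{n}), f(x_{n}')) \geq \varepsilon_{0}$. Since $A$ is compact in the metric space $(X,d)$, it is sequentially compact, so the sequence $(x_{n})$ has a subsequence $x_{n_{k}} \to x_{0}$ with $x_{0} \in A$. The companion points then satisfy
\begin{equation*}
d(x_{n_{k}}', x_{0}) \leq d(x_{n_{k}}', x_{n_{k}}) + d(x_{n_{k}}, x_{0}) < \tfrac{1}{n_{k}} + d(x_{n_{k}}, x_{0}) \to 0,
\end{equation*}
so $x_{n_{k}}' \to x_{0}$ in $X$ as well.

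Now I invoke continuity of $f$ at the point $x_{0} \in A$. Since both $x_{n_{k}} \to x_{0}$ and $x_{n_{k}}' \to x_{0}$ in $(X,d)$, pointwise continuity at $x_{0}$ (viewed as a point of $X$) yields $f(x_{n_{k}}) \to f(x_{0})$ and $f(x_{n_{k}}') \to f(x_{0})$ in $(Y,\rho)$. The triangle inequality then gives $\rho(f(x_{n_{k}}), f(x_{n_{k}}')) \to 0$, contradicting the lower bound $\rho(f(x_{n_{k}}), f(x_{n_{k}}')) \geq \varepsilon_{0}$. Hence the assumption was false, and $f$ is uniformly continuous on $A$ over $X$.

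There is no real obstacle here beyond being careful about what "continuous on $A$" means: $f$ is continuous at each point of $A$ as a point of the ambient space $X$, so that approximating sequences from $X \setminus A$ are allowed in the definition of continuity. This is exactly what is needed to apply continuity to the sequence $x_{n_{k}}'$, which in general does not lie in $A$. Alternatively, one could give a direct finite-covering argument: cover $A$ by balls $B(x, \delta_{x})$ with $\delta_{x}$ chosen so that $d(x,x') < 2\delta_{x}$ forces $\rho(f(x), f(x')) < \varepsilon/2$, extract a finite subcover $B(x_{i}, \delta_{x_{i}})$, $i=1,\dots,n$, and take $\delta := \min_{i} \delta_{x_{i}}$; a standard triangle-inequality step then gives the conclusion. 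Both routes are short, but the sequential argument most directly exhibits why compactness of $A$ alone (and not of $X$) suffices.
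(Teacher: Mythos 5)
Your proof is correct, but it takes a different route from the paper's. You argue by contradiction via sequential compactness: a failing sequence of pairs $(x_n, x_n')$ with $x_n \in A$, $x_n' \in X$, $d(x_n,x_n') < 1/n$ yields, after passing to a subsequence, a common limit $x_0 \in A$, and continuity of $f$ at $x_0$ \emph{as a point of the ambient space} $X$ forces $\rho(f(x_{n_k}), f(x_{n_k}')) \to 0$, a contradiction. You correctly flag the one subtle point: the companion points $x_{n_k}'$ need not lie in $A$, so it matters that ``continuous on $A$'' means continuity at each point of $A$ with respect to approach from all of $X$ --- this is exactly how the paper reads the hypothesis. The paper instead gives the direct finite-subcover argument (the one you sketch as an alternative at the end): choose $\delta(y)$ at each $y \in A$ from continuity with tolerance $\varepsilon/2$, cover $A$ by the half-radius balls $B(y,\delta(y)/2)$, extract a finite subcover, and set $\delta$ to half the minimum of the chosen radii; the triangle inequality through the nearest center then finishes. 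The paper's route is constructive (it exhibits $\delta$ explicitly) and uses only the open-cover definition of compactness, so it transfers verbatim to topological settings where sequential compactness and compactness diverge; your route is shorter and makes transparent that compactness of $A$ alone, not of $X$, is what drives the result, at the cost of invoking the equivalence of compactness and sequential compactness in metric spaces. Either argument is complete and acceptable.
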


The proof is quite standard. We give it for the reader's convenient.

\begin{proof}
Choose a number $\varepsilon>0$ arbitrarily. Since $f$ is continuous at every point $y\in A$, there exists a number $\delta(y)>0$ such that
\begin{equation}\label{cond-continuity}
\bigl(y'\in X, d(y,y')<\delta(y)\bigr)\Longrightarrow\rho(f(y),f(y'))< \frac{\varepsilon}{2}.
\end{equation}
The set $\{B(y,\delta(y)/2):y\in A\}$ of open balls form an open cover of the compact set $A$. Hence, the cover contains a finite subcover consisting of the balls centered at some points $y_{1}$,..., $y_{n}\in A$.
	
Put
\begin{equation*}
\delta:=\frac{1}{2}\min\{\delta(y_{k}):1\leq k\leq n\}.
\end{equation*}
Suppose that points $x\in A$ and $x'\in X$ satisfy $d(x,x')< \delta$. Since $x\in B(y_{k},\delta(y_{k})/2)$ for certain $k\in\{1,\ldots,n\}$, we have $x'\in B(y_k,\delta(y_k))$. Thus,
\begin{equation*}
\rho(f(x),f(x'))\leq\rho(f(y_k),f(x))+\rho(f(y_k),f(x'))\leq \frac{\varepsilon}{2}+\frac{\varepsilon}{2}=\varepsilon
\end{equation*}
due to \eqref{cond-continuity} with $y:=y_k$.
\end{proof}

\section{The uniform convergence and summability on a set}\label{sect6}

Let $f\in L^1(\mathbb{T})$. The Lebesgue theorem states that $\sigma_{n}(x,f)\to f(x)$ as $n\to\infty$ if $x$ is a Lebesgue point of $f$ (see, e.g., \cite[Chapter~III, Theorem~3.9]{Zygmund}). Specifically, this convergence holds true at every point $x\in C_f$. As usual, $C_f$ stands for the set of all points where $f$ is continuous. Hence, if the Fourier coefficients of $f$ satisfy \eqref{Hardy-cond-Fourier}, then $S_n(x, f)\to f(x)$ as $n\to\infty$ for every point $x\in C_f$ by Hardy's Tauberian theorem for Fourier series.

It is natural to raise the question of describing the sets where the last convergence is uniform. This question is significant because there are continuous functions on $\mathbb{T}$ whose Fourier series converge at every point of $\mathbb{T}$ but the convergence failed to be uniform on any closed subinterval of~$\mathbb{T}$. An example of such a function is given in \cite[Chapter~I, Section~44]{Bari}.

It follows from \cite[Chapter~III, Theorem~3.4]{Zygmund}, that \eqref{Hardy-cond-Fourier} implies the uniform convergence of $S_n(x, f)$ to $f(x)$ on every connected closed subset of $C_f$.  We will improve this result. Since the uniform convergence of a sequence of functions $f_n\in C(\mathbb{T})$ on a set $K\subset\mathbb{T}$ implies this convergence on the closure of $K$, we  restrict ourselves to closed subsets of $\mathbb{T}$. For our purpose, we need a certain extension of the classical Fej\'{e}r theorem.

Recall \cite[Chapter~III, Theorem~3.4]{Zygmund} that the Fej\'{e}r kernel
\begin{equation*}
K_{n}(t)= \frac{1}{n+1}\sum_{k=0}^{n}\frac{\sin(k+1/2)t}{2\sin(t/2)}
\end{equation*}
corresponds to the $(C,1)$ summation method of trigonometric Fourier series. This kernel possesses the following properties:
\begin{itemize}
    \item[(A)] $\frac{1}{\pi} \int\limits_{-\pi}^\pi K_n(t)dt=1$ \,whenever $n\in\mathbb{N}$;
    \item[(B)] $K_n(t)\geq0$ whenever $n\in\mathbb{N}$;
    \item[(C)] $\mu_n(\delta)\to0$ as $n\to\infty$ for every   $\delta\in(0,\pi]$, with $\mu_n(\delta):=\max\{K_n(t):\delta\leq t\leq \pi\}$.
\end{itemize}

\begin{theorem}\label{thm:theor4}
Let $f\in L^1(\mathbb{T})$, and let $K$ be a closed subset of $C_f$. Then
$\sigma_n(x,f)$ converges uniformly to $f(x)$ on $K$ as $n\to\infty$.
\end{theorem}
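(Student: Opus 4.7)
The plan is to run the standard Fej\'{e}r-kernel proof of uniform Ces\`{a}ro summability, using the three kernel properties (A)--(C) listed just above the statement, but with the ordinary notion of uniform continuity on $K$ replaced by the \emph{refined} version provided by Theorem~\ref{thm:theor3}. First, since $\mathbb{T}$ is compact and $K\subset\mathbb{T}$ is closed, the set $K$ is compact; the restriction $f\!\upharpoonright\!K$ is continuous because $K\subset C_f$. Viewing $f$ as a mapping $\mathbb{T}\to\mathbb{C}$, Theorem~\ref{thm:theor3} then yields that for every $\varepsilon>0$ there exists $\delta\in(0,\pi]$ such that
\begin{equation*}
x\in K,\;t\in\mathbb{R},\;|t|<\delta\;\;\Longrightarrow\;\;|f(x-t)-f(x)|<\varepsilon.
\end{equation*}
This is the key place where Theorem~\ref{thm:theor3} is used: the shifted point $x-t$ need not lie in $K$, so ordinary Cantor uniform continuity on the compact set $K$ would not suffice.

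Second, by property~(A) of the Fej\'{e}r kernel,
\begin{equation*}
\sigma_n(x,f)-f(x)=\frac{1}{\pi}\int_{-\pi}^{\pi}\bigl(f(x-t)-f(x)\bigr)K_n(t)\,dt,
\end{equation*}
and I split the integral at $|t|=\delta$. On $|t|<\delta$ I use the uniform estimate above together with properties (A) and (B) to bound the contribution by $\varepsilon$. On $\delta\leq|t|\leq\pi$ I use property (C) to dominate $K_n(t)\leq\mu_n(\delta)$; since $f$ is bounded on the compact set $K$, say $|f(x)|\leq M_K$ for $x\in K$, the remaining integral is controlled by
\begin{equation*}
\frac{\mu_n(\delta)}{\pi}\Bigl(\int_{-\pi}^{\pi}|f(x-t)|\,dt+2\pi M_K\Bigr)\leq 2\mu_n(\delta)\bigl(\|f\|_{L^1(\mathbb{T})}+M_K\bigr),
\end{equation*}
a bound that is uniform in $x\in K$ and depends only on $\delta$ through $\mu_n(\delta)$.

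Third, combining the two estimates gives
\begin{equation*}
\sup_{x\in K}|\sigma_n(x,f)-f(x)|\leq\varepsilon+2\mu_n(\delta)\bigl(\|f\|_{L^1(\mathbb{T})}+M_K\bigr),
\end{equation*}
and property~(C) sends the second term to $0$ as $n\to\infty$ (with $\delta$ fixed). Hence $\limsup_{n\to\infty}\sup_{x\in K}|\sigma_n(x,f)-f(x)|\leq\varepsilon$ for every $\varepsilon>0$, which is the uniform convergence claimed.

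The only nontrivial step is the first one: the usual proofs of Fej\'{e}r's theorem on a closed \emph{sub-interval} of $C_f$ extend $f$ continuously to a neighbourhood of that interval and apply classical uniform continuity. For a general closed $K\subset C_f$ no such extension is available, and Theorem~\ref{thm:theor3} is precisely the tool that bypasses this difficulty.
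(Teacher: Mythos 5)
Your proposal is correct and takes essentially the same route as the paper: the paper likewise invokes Theorem~\ref{thm:theor3} to get a modulus of continuity uniform over $x_0\in K$ with the shifted points allowed to leave $K$ (phrased via the symmetric expression $\varphi_{x_0}(t)$), and then delegates the kernel-splitting estimate based on properties (A)--(C) to Zygmund's proof, which you have simply written out in full.
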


If $C_f=K=\mathbb{T}$, we arrive at the classical Fej\'{e}r theorem.

\begin{proof}
Given $x,t\in\mathbb{T}$, we put
\begin{equation*}
\varphi_x(t):=\frac{1}{2}\bigl(f(x+t)+f(x-t)-2f(x)\bigr).
\end{equation*}
Since the function $f$ is continuous on the compact set $K\subset\mathbb{T}$, Theorem~\ref{thm:theor3} asserts that $f$ is uniformly continuous on $K$ over the metric space $\mathbb{T}$. Hence,
for every number $\varepsilon>0$, there exists a number $\delta=\delta(\varepsilon)>0$ such that
\begin{equation*}
|\varphi_{x_0}(t)|\leq \frac{1}{2}\left| f(x_0+t)-f(x_0)\right| + \frac{1}{2}\left| f(x_0-t)-f(x_0)\right| <\varepsilon
\end{equation*}
whenever $|t|<\delta$ and $x_0\in K$. Now we may repeat the reasoning given in the proof of \cite[Chapter~III, Theorem~3.4]{Zygmund} to substantiate Theorem~\ref{thm:theor4}.
 \end{proof}

\begin{remark}
Theorem \ref{thm:theor4} allows generalizing to an arbitrary summation method whose kernel satisfies conditions (A), (B), and (C). Moreover \cite[Chapter~III, Theorem~2.21]{Zygmund}, condition (B) can be replaced with the following more general (in view of (A)) condition:
\begin{itemize}
    \item[$(\mathrm{B}')$] there exists a number $C>0$ such that $\int\limits_{-\pi}^\pi|K_n(t)|dt\leq C$ whenever $n\in \mathbb{N}$
\end{itemize}
\end{remark}

\begin{theorem}\label{thm:theor5}
Suppose that a function $f\in L^1(\mathbb{T})$ satisfies the condition
\begin{equation}\label{cond-Fourier-coeff-bis}
\sum_{|k|\geq n+1}|\widehat{f}(k)|^p=
O\left(\frac{1}{n^{p-1}}\right)\quad\mbox{as}\quad n\to\infty
\end{equation}
for a certain number $p>1$. Then
$S_n(x,f)$ converges uniformly to $f(x)$, as $n\to\infty$, on every closed set $K\subset C_f$.
\end{theorem}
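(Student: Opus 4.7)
The plan is to apply Theorem~\ref{thm:theor1} in its seminorm version (the Remark following Theorem~\ref{thm:theor1}, which is applicable here since $p>1$), using the refined Fej\'{e}r theorem~\ref{thm:theor4} as the $(C,1)$-summability input. I would package the Fourier series symmetrically by setting $u_{0}(x):=\widehat{f}(0)$ and $u_{k}(x):=\widehat{f}(k)e^{ikx}+\widehat{f}(-k)e^{-ikx}$ for $k\geq1$, so that $S_{n}(\cdot,f)=\sum_{k=0}^{n}u_{k}$ and $\sigma_{n}(\cdot,f)$ is the $(C,1)$ mean of this series. The ambient linear space would be the space of bounded functions on $\mathbb{T}$ that are continuous on $K$, equipped with the seminorm $\|g\|_{K}:=\sup_{x\in K}|g(x)|$. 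All $u_{k}$, $S_{n}$, $\sigma_{n}$ lie in this space as trigonometric polynomials, and $f$ lies in it because $K\subset C_{f}$ and $K$ is compact (being closed in the compact space $\mathbb{T}$).

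The first task is to verify the Tauberian estimate \eqref{eq: eq1}. For $k\geq1$ we have the trivial bound $\|u_{k}\|_{K}\leq|\widehat{f}(k)|+|\widehat{f}(-k)|$, so the elementary inequality $(a+b)^{p}\leq 2^{p-1}(a^{p}+b^{p})$ gives
$$
\sum_{k=n+1}^{\infty}\|u_{k}\|_{K}^{p}\,\leq\,2^{p-1}\!\!\sum_{|k|\geq n+1}|\widehat{f}(k)|^{p}\,=\,O\!\left(\frac{1}{n^{p-1}}\right)
$$
directly from the hypothesis \eqref{cond-Fourier-coeff-bis}.

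The second task is to verify $(C,1)$-summability of $\sum u_{k}$ to $f$ in the seminorm $\|\cdot\|_{K}$. Since $f\in L^{1}(\mathbb{T})$ and $K$ is a closed (hence compact) subset of $C_{f}$, Theorem~\ref{thm:theor4} yields $\sigma_{n}(\cdot,f)\to f$ uniformly on $K$, which reads precisely as $\|\sigma_{n}(\cdot,f)-f\|_{K}\to0$. With both hypotheses of the seminorm version of Theorem~\ref{thm:theor1} in hand, the conclusion $\|S_{n}(\cdot,f)-f\|_{K}\to0$ follows, which is the uniform convergence claimed.

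I do not anticipate any significant obstacle: the substantive work has already been absorbed into Theorem~\ref{thm:theor1} (the abstract Tauberian step, which accepts a seminorm once $p>1$) and Theorem~\ref{thm:theor4} (the refined Fej\'{e}r theorem, whose role here is exactly to supply $(C,1)$-summability in the seminorm $\|\cdot\|_{K}$). The only point requiring a moment of care is the passage from the hypothesis, stated in terms of $|\widehat{f}(k)|^{p}$, to the corresponding estimate for $\|u_{k}\|_{K}^{p}$, which is handled by the elementary inequality above.
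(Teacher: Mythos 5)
Your proposal is correct and follows essentially the same route as the paper, which simply cites Theorem~\ref{thm:theor4} for the $(C,1)$-summability on $K$ and Theorem~\ref{thm:theor1} with $X=C(K)$ for the Tauberian step; your use of a sup-seminorm over $K$ on a larger ambient space, rather than restriction to $C(K)$, is an immaterial variant. The details you supply (the symmetric grouping $u_{k}=\widehat{f}(k)e^{ikx}+\widehat{f}(-k)e^{-ikx}$ and the $2^{p-1}$ inequality to pass from \eqref{cond-Fourier-coeff-bis} to \eqref{eq: eq1}) are exactly the ones the paper leaves implicit.
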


\begin{proof} This follows from Theorem~\ref{thm:theor4} and Theorem~\ref{thm:theor1} considered in the $X=C(K)$ case.
\end{proof}

\begin{remark}
Theorem \ref{thm:theor1} implies the following generalization of Hardy's Tauberian theorem for Fourier series: if a function $f\in L^1(\mathbb{T})$ satisfies \eqref{cond-Fourier-coeff-bis} for a certain number $p>1$ and if $\sigma_n(x,f)$ converges uniformly on a certain subset $K$ of $\mathbb{T}$, then $S_n(x,f)$ converges uniformly on the closure $\overline{K}$ of $K$. We note that $\sigma_n(x,f)$ converges uniformly on $\overline{K}$ and uses Theorem~\ref{thm:theor1} in the case when $X=C(\overline{K})$ and where $u_{0}(x)=\widehat{f}(0)$ and $u_{k}(x)=\widehat{f}(k)e^{ikx}+\widehat{f}(-k)e^{-ikx}$ for all $x\in\overline{K}$ and $k\in\mathbb{N}$.
\end{remark}

\section*{Statements and declarations}

\subsection*{Funding.} This work was funded by the Czech Academy of Sciences within grant RVO:67985840 and by the National Academy of Sciences of Ukraine within project 0120U100169. The authors was also supported by the European Union's Horizon 2020 research and innovation programme under the Marie Sk{\l}odowska-Curie grant agreement No~873071 (SOMPATY: Spectral Optimization: From Mathematics to Physics and Advanced Technology).

\subsection*{Author contribution.} Conceptualization by Vladimir Mikhailets; Sections~\ref{sect1} and~\ref{sect4} are performed by Vladimir Mikhailets and Aleksandr Murach; Section~\ref{sect2} by Vladimir Mikhailets and Oksana Tsyhanok; Section~\ref{sect3} by Aleksandr Murach;   Sections~\ref{sect5} and~\ref{sect6} by Vladimir Mikhailets.

\end{document}